\newcommand{\p}{\partial}
\newcommand{\Z}{\mathbb Z}
\newcommand{\M}{\mathcal M}
\renewcommand{\S}{\mathcal S}
\renewcommand{\phi}{\varphi}
\newcommand{\ind}{\operatorname{ind}}
\newcommand{\Tr}{\operatorname{Tr}}
\renewcommand{\Re}{\operatorname{Re}}
\newcommand{\sign}{\operatorname{sign}}
\newcommand{\W}{\operatorname{W}}
\newcommand{\sw}{\mathbf{sw}}
\newcommand{\Sign}{\operatorname{Sign}}
\newcommand{\Dir}{\operatorname{Dir}}
\newcommand{\SW}{\operatorname{SW}}
\newcommand{\FO}{\operatorname{FO}}
\newtheorem{theorem}{Theorem}[section]
\newtheorem{proposition}[theorem]{Proposition}
\newtheorem{corollary}[theorem]{Corollary}
\newtheorem*{conjecture}{Conjecture}
\theoremstyle{definition}
\def\acknowledgementname{Acknowledgements.}
\begin{document}

\title{Casson--type invariants from the Seiberg--Witten equations}%
\thanks{The first author was partially supported by NSF Grants
1105234 and 1065827. The second author was partially supported by NSF
Grant 1065905.}
\author[Daniel Ruberman]{Daniel Ruberman}
\address{Department of Mathematics, Brandeis University, Waltham, MA 02454}
\email{\rm{ruberman@brandeis.edu}}
\author[Nikolai Saveliev]{Nikolai Saveliev}
\address{Department of Mathematics, University of Miami, Coral Gables, FL 33124}
\email{\rm{saveliev@math.miami.edu}}

%\keywords{Casson invariant, Rohlin invariant, Floer homology, flat
%moduli spaces}

\subjclass[2000]{57M27, 57R57, 58J20, 58J28}

\begin{abstract}
This is a survey of our recent work~\cite{MRS1,MRS2,ruberman-saveliev:ded} with Tom Mrowka on Seiberg--Witten gauge theory and index theory for manifolds with periodic ends.  We explain how this work leads to a new invariant, which is related to the classical Rohlin invariant of homology 3-spheres and to the Furuta-Ohta invariant originating in Yang-Mills gauge theory.  We give some new calculations of our invariant for 4-dimensional mapping tori.
\end{abstract}

\maketitle%

\section{Introduction}

Since its inception in the mid-nineties, Seiberg--Witten gauge theory has had numerous applications in topology. The natural domain of this theory comprises simply connected closed oriented smooth 4-manifolds with $b_+ > 1$, where a straightforward count of irreducible solutions to the Seiberg--Witten equations produces a diffeomorphism invariant of the manifold. Here, $b_+$ stands for the number of positive eigenvalues of the intersection form on the second homology of the manifold. The theory has also been extended to manifolds with $b_+ = 1$ using wall-crossing formulas. The project surveyed in this article deals with the Seiberg--Witten theory for a class of manifolds having $b_+ = 0$, including manifolds with integral homology of $S^1 \times S^3$. In this case, the usual count of irreducible solutions to the Seiberg--Witten equations  depends heavily on metric and perturbation but in a joint project with Tom Mrowka, we succeeded in \cite{MRS1} in defining a diffeomorphism invariant by countering this dependence by a correction term. 

The correction term is of great interest in its own right. Its definition was inspired by the work of Weimin Chen \cite{chen:casson} and Yuhan Lim \cite{lim:b1}, who counted irreducible solutions to the Seiberg--Witten equations on a $3$-dimensional homology sphere. This count is not well-defined because of potential contributions from the reducible solutions to the equations as one varies the metric and perturbation in a one-parameter family; this issue is analogous to the one we encounter in the $4$-dimensional case when $b_+= 0$. To get a diffeomorphism invariant, Chen and Lim (independently,  following a suggestion of Kronheimer) added a correction term which is a combination of $\eta$--invariants of Atiyah--Patodi--Singer~\cite{aps:I}. Equivalently, their correction term can be expressed as a combination of the signature of a compact spin $4$-manifold with boundary the homology sphere, and the index of the spin Dirac operator on said $4$-manifold with the Atiyah--Patodi--Singer boundary conditions.

Our correction term is similar to that of Chen and Lim but requires a considerably more complicated analytical setting because our definition involves the spin Dirac operator on a non-compact manifold with a periodic end.  A fundamental analytical issue is therefore ensuring that this Dirac operator is Fredholm and hence has a well-defined index.  In dealing with this issue, we relied on the study of the Fredholm properties of elliptic operators on manifolds with periodic ends initiated by Taubes \cite{T}. We further developed this theory, which allowed us to prove the well-definedness of our invariant in \cite{MRS1}. It also led us in \cite{MRS2} to a general index theorem for end-periodic Dirac operators in the spirit of the Atiyah--Patodi--Singer theorem \cite{aps:I}, complete with a new $\eta$-invariant. A special case of this theorem is described in Section \ref{S:formula}.

Our interest in Seiberg--Witten invariants of manifolds with the homology of $S^1 \times S^3$ is mainly explained by the prominent role these manifolds play in low-dimensional topology\,: several outstanding problems concerning homology cobordisms and the Rohlin invariant can be translated using the doubling construction to problems about a homology $S^1 \times S^3$.  Moreover, the classification of smooth manifolds homotopy equivalent to $S^1 \times S^3$ is a basic problem for the theory of non-simply-connected $4$-manifolds.  We refer the reader to our paper \cite{ruberman-saveliev:survey} and to Section \ref{S:inv} of current paper for details.

It should be pointed out that in \cite{ruberman-saveliev:survey} we studied another set of gauge theoretic invariants of a homology $S^1 \times S^3$ arising from Donaldson gauge theory. As in Seiberg--Witten theory, the study of solutions to the Yang-Mills equations on a manifold with $b_+ = 0$ has some subtleties that are not present in the usual theory of Donaldson invariants. Among the invariants studied in \cite{ruberman-saveliev:survey} is one due to Furuta and Ohta~\cite{furuta-ohta} of manifolds with the $\Z[\Z]$ homology of $S^1 \times S^3$.  We conjecture that this Furuta--Ohta invariant matches the Seiberg--Witten invariants described in this paper. Our conjecture can be viewed as an extension of the Witten conjecture~\cite{witten} comparing Donaldson and Seiberg--Witten invariants to manifolds with $b_+ = 0$. It is straightforward to verify that,  for manifolds of the type $S^1 \times \Sigma$, where $\Sigma$ is an integral homology sphere, the Furuta-Ohta invariant reproduces the Casson invariant of $\Sigma$.  Moreover, our invariant in this product case is equal to the Seiberg--Witten invariant of Chen and Lim.  Thus the conjecture in the product case follows from these observations together with the theorem proved by Lim~\cite{lim:swcasson} that Chen and Lim's invariant is the Casson invariant.  In this paper, we verify the conjecture for the more general case of mapping tori of orientation preserving finite order diffeomorphisms $\tau: \Sigma \to \Sigma$ without fixed points; this is the only original result of this paper, and we provide its complete proof. While we have not been able to handle the situation when $\tau$ has fixed points in full generality, we verified the conjecture in some special cases in \cite{ruberman-saveliev:ded}.

\acknowledgement{\quad This paper grew out of a joint project with Tom Mrowka;
we truly appreciate his ongoing collaboration.  We are also thankful to
Liviu Nicolaescu and Weimin Chen for sharing their insight on the material
discussed in the last two sections.}

%%%%%%%%%%%%%%%%%%%%%%%%%%%%%%%%%%%%%%%%%%%%%%%%%%%%%%%%%%%%%%%%%%%%%%%%%%%%%%

\section{Seiberg--Witten moduli spaces}

A homology $S^1 \times S^3$ is a smooth closed oriented 4-manifold $X$ such that $H_* (X;\mathbb Z) = H_* (S^1 \times S^3;\mathbb Z)$. One way to obtain such a manifold is to furl up a smooth homology cobordism $W$ from an integral homology 3-sphere $\Sigma$ to itself by gluing the two boundary components of $W$ together via the identity map. If $W$ is the product cobordism, this construction will result in $X = S^1 \times \Sigma$, and if $W$ is the mapping cylinder of $\tau: \Sigma \to \Sigma$ the manifold $X$ will be the mapping torus of $\tau$.

The Seiberg--Witten invariant of $X$ that we wish to define will depend on a choice of generator $1 \in H^1 (X;\mathbb Z) = \mathbb Z$, called a \emph{homology orientation}. The invariant will prove to be independent of several other choices, which are however necessary to just write the Seiberg--Witten equations. These are the choices of spin structure, Riemannian metric $g$, and perturbation $\beta \in \Omega^1 (X, i\mathbb R)$. The manifold $X$ has two different spin structures, corresponding to the fact that $H^1 (X;\Z/2) = \Z/2$. Since these spin structures are the same when viewed as $\rm{spin}^c$ structures, our invariant will be independent of this choice. The independence of $g$ and $\beta$ is much less obvious, and proving it is a major part of this project. 

The Seiberg--Witten equations \cite{KM} are a system of non-linear partial differential equations on triples $(A,s,\phi)$, where $A$ is a $U(1)$ connection on the determinant bundle of the spin bundle, $\phi$ is a positive chiral spinor of $L^2$ norm one, and $s \ge 0$ is a real number. The equations read 
\[
F^+_A - s^2\cdot\tau(\varphi) = d^+\beta, \quad D^+_A (X,g) (\varphi) = 0,
\]

\noindent
where $F_A^+ \in \Omega^2_+ (X;i\mathbb R)$ is the anti-self-dual part of the curvature, $D^+_A (X,g)$ is the chiral Dirac operator on $X$, and $\tau(\phi)$ is a quadratic form in $\phi$ whose exact nature is immaterial for this paper. The gauge group, which consists of the maps $u: X \to S^1$, acts on the set of solutions of this system by the rule $(A,s,\varphi) \to (A-u^{-1} du,s,u\cdot\phi)$. The gauge equivalence classes of solutions $(A,s,\phi)$ form the Seiberg--Witten moduli space $\M (X,g,\beta)$. Solutions are called \emph{reducible} if $s = 0$, and \emph{irreducible} otherwise.

Note that these are the blown up equations of Kronheimer and Mrowka \cite{KM}. For $s > 0$, the map $(A,s,\varphi) \to (A, s\cdot\varphi)$ would take us back to the original Seiberg--Witten equations, but the reducibles now appear as the boundary points of the moduli space rather than as singularities. This apparently modest change in perspective turns out to be crucial for the analysis of the change in the moduli space in a path of perturbations and metrics that arises in the proof of Theorem~\ref{lsw}.  Also note that for $X$ a homology $S^1 \times S^3$, any $\omega \in \Omega^2_+ (X,i\mathbb R)$ normally used as a perturbation is of the form $\omega = d^+\beta$ because $H^2_+ (X;\mathbb Z) = 0$.

\begin{theorem}\label{T:reg}
Let $g$ be a metric on $X$. For a generic $\beta$, the moduli space $\M(X,g,\beta)$ is a compact oriented 0-dimensional manifold with no reducibles. 
\end{theorem}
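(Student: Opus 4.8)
The plan is to establish in turn four properties: that $\M(X,g,\beta)$ has expected dimension zero, that for generic $\beta$ it contains no reducibles, that its irreducible part is regular, and that it is compact and orientable. I expect the absence of reducibles to be the heart of the matter, precisely because the condition $b_+ = 0$ removes the usual mechanism for avoiding them and replaces it with a codimension count for the Dirac operator.

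First I would compute the expected dimension. After imposing the Coulomb gauge, the deformation operator of the blown-up equations at an irreducible solution is a compact perturbation of $-(d^* \oplus d^+)$ on forms coupled to the chiral Dirac operator $D^+_A$ on spinors, so its index equals $2\operatorname{ind}_{\mathbb C} D^+_A + (b_1 - b_+ - 1)$. Since $X$ is a homology $S^1 \times S^3$ we have $\chi(X) = \sigma(X) = 0$ and $c_1$ is torsion, so the Atiyah--Singer formula gives $\operatorname{ind}_{\mathbb C} D^+_A = (c_1^2 - \sigma(X))/8 = 0$; as $b_1 = 1$ and $b_+ = 0$, the total index is $2\cdot 0 + (1 - 0 - 1) = 0$. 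Thus the moduli space has expected dimension zero.

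Next, and this is the main obstacle, I would rule out reducibles. A reducible has $s = 0$, so the curvature equation reduces to $F^+_A = d^+\beta$ while the Dirac equation, together with $\|\varphi\| = 1$, forces $\ker D^+_A \neq 0$. Because $b_+ = 0$ the operator $d^+$ is surjective, so the curvature equation is \emph{always} solvable and its gauge classes of solutions sweep out the Picard torus $H^1(X;\mathbb R)/H^1(X;\mathbb Z) \cong S^1$; hence, unlike when $b_+ > 0$, reducibles cannot be removed by obstructing the curvature equation. Instead I would exploit that $D^+_A$ is complex linear of index $0$, so its non-invertible locus has real codimension two. Varying $\beta$ moves the reducible connection through $d^+\beta = F^+_A$, hence moves the Clifford term in $D^+_A = D^+_{A_0} + \frac{1}{2}\operatorname{cl}(A - A_0)$; the key technical step is a Sard--Smale argument showing that the universal wall $\{(\beta,[A]) : \ker D^+_A \neq 0\}$ is a Banach submanifold of codimension two, the transversality being supplied by surjectivity of $d^+$ together with unique continuation (Aronszajn) for harmonic spinors. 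Since the reducible locus over a fixed $\beta$ is one-dimensional, for generic $\beta$ the circle $S^1$ misses the codimension-two wall entirely, leaving no reducibles. I note that this same codimension-two phenomenon explains why the count is only locally constant in $\beta$: along a generic \emph{path} of perturbations the two-dimensional family $S^1 \times [0,1]$ can cross the wall, producing the jumps that the correction term is designed to absorb.

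Finally I would handle transversality, orientation, and compactness by the usual arguments. For irreducibles the parametrized moduli space is cut out transversally because the perturbation $d^+\beta$ is unobstructed, again using surjectivity of $d^+$ and unique continuation (so that no nonzero cokernel element can be $L^2$-orthogonal to all variations of $\beta$ where $\varphi \neq 0$); applying Sard--Smale to the index-zero Fredholm projection onto the perturbation space shows that for generic $\beta$ the irreducible part is a smooth $0$-manifold. Orientability follows by trivializing the determinant line of the family of deformation operators: since $H^2_+(X;\mathbb R) = 0$, the only real factor is $H^1(X;\mathbb R) \cong \mathbb R$, so the chosen generator $1 \in H^1(X;\mathbb Z)$, that is the homology orientation, fixes a coherent orientation. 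Compactness is the standard Seiberg--Witten argument: the Weitzenb\"ock formula bounds the spinor pointwise and hence the curvature in $C^\infty$, the blow-up parameter $s = \|\Phi\|_{L^2}$ is bounded by the same a priori estimate, and the gauge group being abelian there is no bubbling. Combined with the absence of reducibles, this makes $\M(X,g,\beta)$ a compact oriented $0$-manifold without boundary, hence a finite signed set of points.
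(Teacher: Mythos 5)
Your proposal is correct in its overall architecture, and that architecture --- dimension count, a special argument to exclude reducibles, Sard--Smale transversality for irreducibles, standard compactness, and orientation via the homology orientation --- is the same as in the proof the paper points to, namely \cite[Proposition 2.2]{MRS1} (the survey itself gives no proof, only this citation). The genuine difference is in the key step, the exclusion of reducibles. You do it with a wall-crossing count in the space of perturbations: the universal jumping locus $\{(\beta,[A]) : F^+_A = d^+\beta,\ \ker D^+_A \neq 0\}$ has real codimension two because $D^+_A$ is complex linear of index zero (strictly speaking it is a countable union of strata of codimension $\geq 2$, not a single Banach submanifold, but this does not affect the Sard--Smale argument: the projection to the $\beta$-space has Fredholm index $1-2 = -1$ on each stratum, so its image misses a residual set of $\beta$). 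The paper's framework encodes the same codimension-two phenomenon holomorphically: the circle of reducible connections is identified with the unit circle $|z|=1$ in the holomorphic family $D^{\pm}_z + \beta$ of twisted Dirac operators over $\mathbb C^*$ (Section \ref{S:formula}), complex linearity makes the spectral set $\S$ discrete, and adding to $\beta$ a multiple of the harmonic representative of the generator of $H^1(X;\mathbb R)$ dilates $\S$ radially, so a generic such perturbation moves $\S$ off the unit circle. Both routes rest on the identical fact that non-invertibility in a complex-linear index-zero family is a complex-codimension-one condition, but the holomorphic packaging buys strictly more: by Taubes' criterion, invertibility of $D_z+\beta$ for all $|z|=1$ is exactly Fredholmness of the end-periodic operator $D^+(Z_+,g)+\beta$, i.e.\ Theorem \ref{T:fred}, which the correction term $w(X,g,\beta)$ and hence the invariant $\lambda_{\,\SW}$ require; your argument proves Theorem \ref{T:reg} but would leave Theorem \ref{T:fred} to be established separately. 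Your closing observation --- that along a path of perturbations the two-dimensional family $S^1 \times [0,1]$ of reducibles can cross the wall, producing the jumps the correction term absorbs --- is precisely the picture of Figure \ref{fig:param} and of the discussion following Theorem \ref{T:reg}.
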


For a proof, see \cite[Proposition 2.2\,]{MRS1}. Any pair $(g,\beta)$ as in Theorem \ref{T:reg} will be called \emph{regular}. Given a regular pair, denote by $\#\M(X,g,\beta)$ the signed count of points in the moduli space $\M(X,g,\beta)$. In general, this count will depend of the choice of $(g,\beta)$. 

To quantify this dependence, take two regular pairs $(g_0,\beta_0)$ and $(g_1,\beta_1)$ and connect them by a path $(g_t,\beta_t)$. This path can be chosen so that it goes through at most finitely many non-regular pairs $(g_t,\beta_t)$. The moduli spaces $\M(X,g_t,\beta_t)$ at such pairs will have reducibles which will prompt jumps in the count, see Figure \ref{fig:param}.

\medskip

\begin{figure}[!ht]
\centering
\psfrag{s}{$s$}
\psfrag{t}{$t$}
\psfrag{t=0}{$t=0$}
\psfrag{t=1}{$t=1$}
\includegraphics{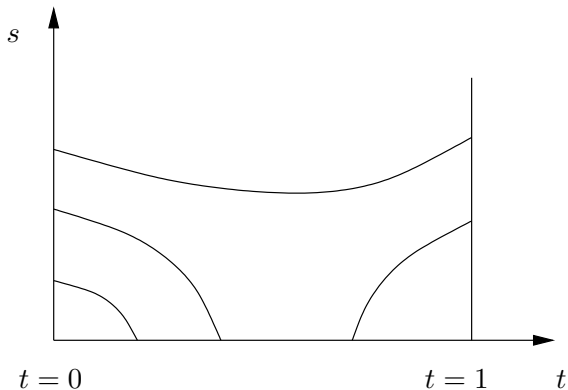}
\caption{The parameterized moduli space}
\label{fig:param}
\end{figure}

%%%%%%%%%%%%%%%%%%%%%%%%%%%%%%%%%%%%%%%%%%%%%%%%%%%%%%%%%%%%%%%%%%%%%%%%%%%%

\section{The correction term}
Let $X$ be a homology $S^1 \times S^3$ and $p: \tilde X \to X$ the infinite cyclic cover corresponding to the generator $1 \in H^1 (X;\mathbb Z)$ provided by the choice of homology orientation. Given a submanifold $Y \subset X$ dual to this generator, cut $X$ open along $Y$ to obtain a cobordism $W$ from $Y$ to itself and write
\[
\tilde X\;=\;\bigcup_{n \in \mathbb Z}\; W_n\quad\text{with}\quad W_n = W.
\]
Define
\[
\tilde X_+\;=\;\bigcup_{n \ge 0}\; W_n\quad\text{and}\quad
Z_+\;=\;Z \cup \tilde X_+
\]

\smallskip\noindent
for any smooth compact spin 4-manifold $Z$ with boundary $Y$. The manifold $Z_+$ is a manifold with periodic end in the sense of Taubes \cite{T}. Our choices of metric $g$, perturbation $\beta$ and spin structure on $X$ lift to $\tilde X_+$ and extend to a metric $g$, perturbation $\beta$ and spin structure on $Z_+$. With respect to Sobolev $L^2$--completions, the spin Dirac operator $D^+ (Z_+,g): L^2_1 (Z_+,S^+) \to L^2 (Z_+,S^-)$ is a bounded operator on the non-compact manifold $Z_+$. The following is proved in \cite[Theorem 3.1]{MRS1}.

\begin{theorem}\label{T:fred}
The perturbed Dirac operator $D^+ (Z_+,g) + \beta$ is Fredholm for any regular pair $(g,\beta)$.
\end{theorem}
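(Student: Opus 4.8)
The plan is to establish Fredholmness of the end-periodic Dirac operator by reducing it, via the analysis of Taubes, to a spectral condition on the operator associated to the periodic end $\tilde X_+$, and then to verify that condition using the regularity hypothesis on the pair $(g,\beta)$. Since $Z$ is compact and $Z_+ = Z \cup \tilde X_+$ differs from the model periodic manifold only on a compact piece, the Fredholm property is determined entirely by the behavior of the operator on the end. So the first step is to reduce the problem to the translation-invariant structure on $\tilde X = \bigcup_n W_n$.

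On the infinite cyclic cover $\tilde X$ the deck transformation acts by shifting $W_n \to W_{n+1}$, so the Dirac operator $D^+(\tilde X, g) + \beta$ commutes with this $\Z$-action and is therefore amenable to a Fourier--Laplace transform in the periodic direction. The key step, following Taubes, is to introduce the complex parameter $z \in \mathbb C^*$ and form the family of operators $D^+_z$ on the compact manifold $Y \times S^1$ (equivalently on the fundamental domain $W$ with a twisting by $z$ across the two copies of $Y$). The analytic heart of the matter is the statement that $D^+(Z_+,g)+\beta$ is Fredholm on the weighted Sobolev spaces precisely when the family $D^+_z$ is invertible for all $z$ on the unit circle $|z| = 1$; this is the end-periodic analogue of the Atiyah--Patodi--Singer condition that the relevant operator on the cross-section have no kernel. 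I would cite Taubes's theory for this equivalence and concentrate on verifying the spectral hypothesis.

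The main obstacle, and the crux of the proof, is therefore to show that $D^+_z$ is invertible for every $z$ with $|z| = 1$. This is exactly where the hypothesis that $(g,\beta)$ is regular enters. By Theorem~\ref{T:reg}, a regular pair produces no reducible solutions, and a reducible solution corresponds precisely to a harmonic spinor for the perturbed Dirac operator on $X$ twisted by a flat $U(1)$ connection; the flat connections on $X$ are parameterized by $H^1(X;\R)/H^1(X;\Z)$, which is a circle, and under the Fourier--Laplace transform these correspond to the values $z = e^{i\theta}$ on the unit circle. Thus the absence of reducibles for the regular pair $(g,\beta)$ translates directly into the invertibility of $D^+_z$ for all $|z|=1$, which is the condition needed. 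The one point requiring care is the identification of the kernel of $D^+_z$ with the twisted harmonic spinors on $X$ and the matching of ``regular'' in the sense of Theorem~\ref{T:reg} with ``no twisted harmonic spinors''; one must check that the perturbation $\beta$ enters both pictures compatibly. Once this dictionary is in place, the Fredholm conclusion follows immediately from the Taubes criterion, completing the argument for all of $Z_+$ since replacing the end model by $Z_+$ alters the operator only by a compact perturbation.
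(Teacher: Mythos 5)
Your proposal is correct and follows essentially the same route as the paper's proof (given in \cite{MRS1}, Theorem 3.1): Taubes's criterion reduces Fredholmness of the end-periodic operator to invertibility of the Fourier--Laplace family $D^+_z + \beta$ for all $|z|=1$, and the kernel of that family on the unit circle is identified with the spinor parts of reducible solutions of the perturbed Seiberg--Witten equations, which are excluded by the regularity of $(g,\beta)$ via Theorem~\ref{T:reg}. One small correction: the Fourier--Laplace family lives on the closed manifold $X$ itself --- exactly as your parenthetical about twisting $W$ across the two copies of $Y$ describes --- and not on $Y \times S^1$, which is the special case of a product; with that fixed, your argument is the paper's.
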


Note that the operator $D^+ (Z_+,g) + \beta$ is complex linear and, for any regular pair $(g,\beta)$, define the \emph{correction term}
\[
w(X,g,\beta)\;=\;\ind_{\,\mathbb C} (D^+ (Z_+,g) + \beta)\, +\, \frac 1 8\,\sign Z.
\]

\begin{theorem}
The correction term $w(X,g,\beta)$ is independent of the choices of manifolds $Z$ and $Y \subset X$, and the way $g$, $\beta$ and the spin structure are extended to $Z$.
\end{theorem}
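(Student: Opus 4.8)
The plan is to establish the independence in stages, exploiting the fact that the signature term is tailored precisely to cancel the Dirac index on \emph{closed} spin $4$-manifolds. Indeed, for a closed spin $4$-manifold $N$ the Atiyah--Singer index theorem gives $\ind_{\mathbb C} D^+(N) = \hat A(N) = -\tfrac18\,\sign N$, so that
\[
\ind_{\mathbb C} D^+(N) + \tfrac18\,\sign N = 0.
\]
This vanishing is the engine behind every step. Throughout I hold fixed the end $\tilde X_+$, together with the metric, perturbation and spin structure on it, since these are determined by $X$, the homology orientation, and the regular pair $(g,\beta)$; only the compact piece $Z$ and the submanifold $Y$ are allowed to vary. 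For a fixed filling $Z$, two extensions of the metric and perturbation are joined by a path that is constant on the end; as $(g,\beta)$ is regular on $X$, Theorem~\ref{T:fred} keeps the operator Fredholm along the whole path, so its index is locally constant and hence unchanged, while $\sign Z$ is a topological invariant. Thus $w$ is insensitive to the extensions of $g$ and $\beta$.

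The heart of the matter is independence of the filling $Z$ for a fixed $Y$. Given two compact spin fillings $Z_0$ and $Z_1$ of $Y$, form the closed spin $4$-manifold $N = Z_0 \cup_Y (-Z_1)$. Since $Z_{0,+} = Z_0 \cup_Y \tilde X_+$ and $Z_{1,+} = Z_1 \cup_Y \tilde X_+$ share the identical Fredholm end, an excision (splitting) principle for end-periodic Dirac operators should yield
\[
\ind_{\mathbb C} D^+(Z_{0,+}) - \ind_{\mathbb C} D^+(Z_{1,+}) = \ind_{\mathbb C} D^+(N).
\]
Combining this with the closed-manifold identity above and Novikov additivity $\sign N = \sign Z_0 - \sign Z_1$ gives
\[
\ind_{\mathbb C} D^+(Z_{0,+}) + \tfrac18\,\sign Z_0 = \ind_{\mathbb C} D^+(Z_{1,+}) + \tfrac18\,\sign Z_1,
\]
so $w$ does not depend on the filling. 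Because $\ind_{\mathbb C} D^+(N) = \hat A(N)$ depends only on the oriented manifold $N$ and not on its spin structure, the same computation shows $w$ is also unaffected by the choice of spin structure on $Z$ (rel $\partial$).

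To remove the dependence on $Y$, I would observe that any two submanifolds dual to the generator become, after an isotopy inside $\tilde X$, two truncation levels of the same infinite cyclic cover that differ by a compact cobordism $V$ from $Y$ to $Y'$. Then the deeper end is common, $\tilde X_+ = V \cup_{Y'} \tilde X_+'$, so that
\[
Z \cup_Y \tilde X_+ = (Z \cup_Y V) \cup_{Y'} \tilde X_+',
\]
and $Z \cup_Y V$ is a compact spin filling of $Y'$. The claim for $Y$ versus $Y'$ therefore reduces to the filling-independence already established.

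The step I expect to be the main obstacle is the excision formula for the end-periodic operators. Unlike the compact or Atiyah--Patodi--Singer setting, one side of the gluing is the non-compact periodic end $\tilde X_+$, so additivity of the index is not formal: it rests on the Fredholm theory of Taubes~\cite{T} and its refinement in the periodic-end case. Making this splitting principle rigorous---controlling the index contribution of the infinite end and showing that it cancels between $Z_{0,+}$ and $Z_{1,+}$---is where the genuine analysis lies, whereas the Atiyah--Singer computation, Novikov additivity, and homotopy invariance of the Fredholm index are standard.
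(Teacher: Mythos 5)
Your first two steps---independence of the extensions of $g,\beta$ and independence of the filling $Z$ for fixed $Y$---are exactly the intended argument: excision for end-periodic operators, the closed-manifold identity $\ind_{\,\mathbb C} D^+(N) = -\tfrac18\sign N$, and Novikov additivity are how this is proved in \cite[Proposition 3.2]{MRS1}, and you correctly flag the excision principle as the analytic input supplied by the Fredholm theory of \cite{T,MRS1}. The genuine gap is in your final step, the independence of $Y$. When you rewrite $Z \cup_Y \tilde X_+ = (Z\cup_Y V)\cup_{Y'}\tilde X_+'$, the index term is indeed unchanged, since it is literally the same operator on the same end-periodic manifold; but the correction term is not the index alone---it also contains one eighth of the signature of the compact piece, and that piece has changed from $Z$ to $Z\cup_Y V$. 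By Novikov additivity,
\[
\ind_{\,\mathbb C}\, D^+\bigl((Z\cup_Y V)_+\bigr) + \tfrac18\,\sign (Z\cup_Y V)
\;=\;
\ind_{\,\mathbb C}\, D^+(Z_+) + \tfrac18\,\sign Z + \tfrac18\,\sign V,
\]
so what your argument actually proves is that the two computations of $w$ differ by $\tfrac18\,\sign V$, and then filling-independence for $Y'$ propagates this discrepancy to every filling $Z'$ of $Y'$. Nothing in your proposal shows $\sign V = 0$, and this is not formal: it is precisely the point where the homological hypothesis on $X$ must enter (for a general $4$-manifold with $b_1 = 1$ and $b_2 \neq 0$ the analogous vanishing is false).

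The missing lemma is that any compact piece $V\subset \tilde X$ lying between lifts of two submanifolds dual to the generator has $\sign V = 0$. One proof: since $H_2(X;\Q)=0$, the Wang sequence shows that $t-1$ acts surjectively on $H_2(\tilde X;\Q)$, which is finitely generated over the PID $\Q[t,t^{-1}]$; hence $H_2(\tilde X;\Q)$ is a torsion $\Q[t,t^{-1}]$-module. For $a,b \in H_2(\tilde X;\Q)$ the sum $\langle a,b\rangle = \sum_{k} \,(a\cdot t^k b)\,t^k$ is finite (compact representatives) and satisfies $\langle t a,b\rangle = t\,\langle a,b\rangle$; so if $p(t)\,a = 0$ with $p\neq 0$, then $p(t)\langle a,b\rangle = 0$, and since $\Q[t,t^{-1}]$ is a domain, $\langle a,b\rangle = 0$. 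Taking the coefficient of $t^0$ gives $a\cdot b = 0$, i.e.\ the intersection form of $\tilde X$ vanishes identically. Since classes in $H_2(V)$ pair in $V$ the same way their images pair in $\tilde X$, the form on the image of $H_2(V)\to H_2(V,\p V)$ is zero and $\sign V = 0$. With this lemma your reduction to filling-independence goes through; without it (or some equivalent), the proof is incomplete at exactly the step you present as the most innocuous one.
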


For a proof, see \cite[Proposition 3.2\,]{MRS1}. Since the manifold $Z_+$ is not compact, the index $\ind_{\,\mathbb C}(D^+ (Z_+,g)+\beta)$ is sensitive to changes in metric $g$ and perturbation $\beta$. This makes the correction term $w(X,g,\beta)$ depend on the choice of regular pair $(g,\beta)$. Like with the count $\#\,\M(X,g,\beta)$ in the previous section, quantifying this dependence involves tracing the jumps in $\ind_{\,\mathbb C} (D^+ (Z_+,g_t) + \beta_t)$ along a generic path $(g_t,\beta_t)$ between two regular pairs. This requires a good understanding of the index theory on manifolds with periodic ends. This theory was initiated by Taubes \cite{T}. In our paper \cite{MRS1} we developed it far enough to calculate the jumps in the index and to match them with the jumps in the count $\#\,\M(X,g,\beta)$. This has led to the invariant described in the next section.

%%%%%%%%%%%%%%%%%%%%%%%%%%%%%%%%%%%%%%%%%%%%%%%%%%%%%%%%%%%%%%%%%%%%%%%%%%%%%%

\section{The invariant}\label{S:inv}
Let $X$ be a smooth oriented 4-manifold with the integral homology of $S^1 \times S^3$ and with a fixed homology orientation $1 \in H^1 (X;\Z)$. Given a regular pair $(g,\beta)$, define
\[
\lambda_{\,\SW} (X) = \#\,\M(X,g,\beta) - w(X,g,\beta).
\]

\begin{theorem}\label{lsw}
The invariant $\lambda_{\,\SW} (X)$ is well defined, that is, independent of the choice of regular pair $(g,\beta)$. Moreover, the reduction of $\lambda_{\,\SW} (X)$ modulo 2 equals the Rohlin invariant of $X$.
\end{theorem}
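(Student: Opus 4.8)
The plan is to prove invariance by a continuation argument and then extract the Rohlin congruence by reducing the resulting formula modulo $2$. First I would fix the two regular pairs $(g_0,\beta_0)$ and $(g_1,\beta_1)$ and join them by a generic path $(g_t,\beta_t)$ meeting the non-regular locus in finitely many parameters $t_1,\dots,t_k$, as in Figure~\ref{fig:param}. Away from these parameters both $\#\,\M(X,g_t,\beta_t)$ and $\ind_{\,\mathbb C}(D^+(Z_+,g_t)+\beta_t)$ are locally constant, the former by Theorem~\ref{T:reg} and the latter by Theorem~\ref{T:fred} together with the continuity of the Fredholm index; since $\sign Z$ is a topological constant, it suffices to show that $\#\,\M$ and $\ind_{\,\mathbb C}$ jump by the same integer across each $t_i$.

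The key geometric observation driving the matching is that a single event is responsible for both jumps. A reducible in $\M(X,g_{t_i},\beta_{t_i})$ is a triple $(A,0,\phi)$ in which $A$ is the essentially unique connection with $F_A^+=d^+\beta_{t_i}$ and $\phi$ is a nonzero spinor with $D^+_A\phi=0$; since $b_+=0$, the reducible connections are parametrized by $H^1(X;\mathbb R)/H^1(X;\mathbb Z)$, which is exactly the unit circle of parameters $z=e^{2\pi i\theta}$ appearing in the Fourier--Laplace family $D(z)$ of twisted Dirac operators on the compact manifold $X$ that governs the end-periodic operator on $Z_+$. Thus the appearance of a harmonic spinor for a reducible connection---the wall-crossing that forces $\#\,\M$ to jump---is literally the same spectral crossing $\ker D(z)\neq 0$ on $|z|=1$ that makes the end-periodic Dirac operator lose and regain its Fredholm property and hence forces $\ind_{\,\mathbb C}$ to jump. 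I would make this quantitative by computing each jump as a spectral flow: for a generic path the crossing is transverse and one-complex-dimensional, the local model in the blown-up picture of Kronheimer--Mrowka~\cite{KM} (where reducibles sit on the boundary $s=0$) contributes $\pm1$ to $\#\,\M$, and the end-periodic index theory of Taubes~\cite{T} and~\cite{MRS1} contributes the same $\pm1$ to $\ind_{\,\mathbb C}$.

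The main obstacle is precisely this quantitative matching, including the reconciliation of orientations: one must show not only that both quantities change but that the signs agree, so that the difference $\#\,\M-\ind_{\,\mathbb C}$, and hence $\lambda_{\,\SW}(X)=\#\,\M-w$, is genuinely constant along the path. This is the analytic heart of the argument, and it is where the development of the end-periodic index theory does the real work: one must express $\ind_{\,\mathbb C}(D^+(Z_+)+\beta)$ through an Atiyah--Patodi--Singer-type formula with a periodic-end $\eta$-invariant and compute its variation as the spectral flow of $D(z)$ across $|z|=1$, and then orient the Seiberg--Witten wall-crossing count compatibly with that spectral flow.

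For the congruence modulo $2$ I would start from the now-established identity $\lambda_{\,\SW}(X)=\#\,\M-\ind_{\,\mathbb C}(D^+(Z_+)+\beta)-\tfrac18\sign Z$ and reduce each term. The signature term is the cleanest: since $Z$ is a compact spin $4$-manifold bounding the homology sphere $Y\subset X$, its intersection form is even and unimodular, so $\sign Z\equiv 0\pmod 8$ and $\tfrac18\sign Z\equiv\mu(Y)\pmod 2$ is the Rohlin invariant of $Y$, which is by definition the Rohlin invariant $\mu(X)$ of the cross-section. It therefore remains to prove $\#\,\M\equiv\ind_{\,\mathbb C}(D^+(Z_+)+\beta)\pmod 2$. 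Here the wall-crossing analysis already pays off: across each $t_i$ both sides change by $\pm1\equiv1\pmod 2$, so $\#\,\M-\ind_{\,\mathbb C}$ is a residue independent of $(g,\beta)$, and I would pin it down at a reference configuration where the quaternionic structure of the spin Dirac operator is available. Passing to the unperturbed operator together with a generic metric, that quaternionic structure forces $\ker$ and $\operatorname{coker}$ of $D^+(Z_+)$ to be quaternionic and hence $\ind_{\,\mathbb C}D^+(Z_+)$ to be even, while the same symmetry of the unperturbed equations should make the solution count even as well, identifying the residue as $0$ and yielding $\lambda_{\,\SW}(X)\equiv\tfrac18\sign Z\equiv\mu(X)\pmod 2$. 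Verifying the evenness of this reference count, where the quaternionic symmetry must be reconciled with the orientation conventions and with regularity of the unperturbed equations, is the delicate point of the modulo-$2$ argument.
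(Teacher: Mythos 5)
Your overall strategy is the same as the one the paper (which defers the proof to \cite{MRS1}) sketches: prove well-definedness by joining two regular pairs by a generic path and matching the jumps of $\#\,\M$ against the jumps of $\ind_{\,\mathbb C}$, with the key mechanism being that a reducible solution is precisely a spectral point of the Fourier--Laplace family $D_z$ on the unit circle, i.e.\ exactly the event at which the end-periodic operator ceases to be Fredholm, the index jumps being spectral flow; and prove the congruence by evaluating at an unperturbed, quaternionic-linear reference operator. Two steps of your mod~2 argument, however, have genuine problems. First, you justify $\sign Z \equiv 0 \pmod 8$ by asserting that $Y \subset X$ is a homology sphere with even unimodular intersection form on $Z$. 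That is false in the generality of Theorem~\ref{lsw}: the theorem covers all integral homology $S^1 \times S^3$'s, and a cross-section dual to the generator is an integral homology sphere only in the more restrictive $\Z[\Z]$ case (if some cross-section is a homology sphere, the infinite cyclic cover automatically has the homology of $S^3$). For a general $X$, for instance one obtained by surgery on a $2$-knot with nontrivial Alexander module, every cross-section $Y$ has nontrivial homology, so the form on $Z$ is neither unimodular nor necessarily even, and the integrality of $\sign Z/8$ needs a different argument.

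Second, your evaluation of the residue $\#\,\M - \ind_{\,\mathbb C}$ at the unperturbed operator silently assumes that $D^+(Z_+,g)$ is Fredholm for a generic metric $g$. Theorem~\ref{T:fred} does not give this: it applies only to the perturbed operators $D^+(Z_+,g)+\beta$ at regular pairs, and $(g,0)$ is typically not regular. The point matters because for $\beta \ne 0$ the perturbed operator is not quaternionic linear, so evenness of its index cannot be read off directly; one must pass to $\beta = 0$, and without Fredholmness there the index can jump along the segment joining $\beta$ to $0$, destroying the transfer. The paper is explicit that this is a strictly stronger statement than Theorem~\ref{T:fred}, proved separately in \cite{ruberman-saveliev:ggt}, and your proposal needs to invoke it. Relatedly, the evenness of the reference count, which you correctly flag as the delicate point, hides a specific difficulty worth naming: charge conjugation sends the perturbation $\beta$ to $-\beta$, so the form perturbations used everywhere else in the argument cannot be made equivariant, and one must either enlarge the class of perturbations (keeping the conjugation action free on irreducibles and checking it preserves orientations) or find another route, as is done in \cite{MRS1}.
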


This theorem is proved in \cite{MRS1}. Recall that the Rohlin invariant of $X$ is defined as $\sign Z/8\pmod 2$, where $Z$ is a smooth compact spin manifold with boundary $Y \subset X$ dual to the generator $1 \in H^1 (X;\Z)$. The proof that $\lambda_{\,\SW} (X)$ reduces to the Rohlin invariant modulo 2 requires a stronger version of Theorem \ref{T:fred} stating that the \emph{unperturbed} Dirac operator $D^+(Z_+,g)$ is Fredholm for a generic metric $g$ on $X$. This was proved in our paper \cite{ruberman-saveliev:ggt}. 

The fact that $\lambda_{\,\SW} (X)$ reduces modulo 2 to the Rohlin invariant opens the door to potential applications of $\lambda_{\,\SW} (X)$ to several old problems concerning homology cobordisms. These problems are described in \cite{ruberman-saveliev:survey}, where we attempted to address them using a different gauge theoretic invariant $\lambda_{\,\FO} (X)$ called the \emph{Furuta--Ohta invariant}. The latter is defined using Donaldson theory as roughly one quarter times a count of irreducible instantons in the trivial $SU(2)$ bundle on $X$. Here, $X$ must be a $\mathbb Z[\mathbb Z]$ homology $S^1 \times S^3$ meaning that, in addition to its having the integral homology of $S^1 \times S^3$, its infinite cyclic cover has the integral homology of $S^3$. This additional condition is satisfied, for instance, when a generator of $H_3 (X;\mathbb Z)$ is carried by an integral homology sphere $Y \subset X$. Later in the paper, we will present evidence for the following conjecture.

\begin{conjecture}
Let $X$ be a $\mathbb Z[\mathbb Z]$ homology $S^1 \times S^3$ with a fixed orientation and homology orientation. Then 
\begin{equation}\label{E:conj}
\lambda_{\,\FO} (X) = - \lambda_{\,\SW} (X).
\end{equation}
\end{conjecture}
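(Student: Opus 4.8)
The plan is to prove \eqref{E:conj} by interpolating between the two gauge theories with a single moduli space, following the $\operatorname{PU}(2)$ monopole program of Pidstrigach--Tyurin and Feehan--Leness that underlies the proof of the Witten conjecture in the closed case, but adapted to the two novel features of our setting: $b_+ = 0$ and a periodic end. The starting observation is that both invariants have the same formal shape, namely a signed count corrected by an index-theoretic term forced on us by metric dependence. On the Seiberg--Witten side this is $\lambda_{\,\SW}(X) = \#\,\M(X,g,\beta) - w(X,g,\beta)$ of Theorem~\ref{lsw}, and on the Donaldson side $\lambda_{\,\FO}(X)$ is, up to the factor of $\tfrac14$, an analogous count of irreducible $\SU(2)$ instantons corrected by the index of the anti-self-dual deformation operator on the corresponding periodic-end manifold. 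The strategy is therefore to match the \emph{counts} by a cobordism and, separately, to match the \emph{correction terms} by an index theorem, and to check that the two matchings carry the same overall sign.

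First I would set up the $\operatorname{PU}(2)$ monopole equations on $X$ in the blown-up formalism used for the Seiberg--Witten equations above, so that reducibles appear as boundary rather than singular points. The moduli space $\M^{\operatorname{PU}(2)}(X,g,\beta)$ should be a manifold-with-corners whose top-dimensional ends decompose into an anti-self-dual stratum, where the spinor vanishes and one recovers the instantons counted by $\lambda_{\,\FO}(X)$, and a Seiberg--Witten stratum, where the connection reduces and one recovers the solutions counted by $\lambda_{\,\SW}(X)$. The link of each stratum is a projective-bundle or sphere bundle whose Euler number produces the universal coefficient relating the two counts; this is exactly where the factor $\tfrac14$ on the Furuta--Ohta side and the sign $-1$ in \eqref{E:conj} are forced, and getting the orientations consistent with our chosen homology orientation is a delicate but in-principle routine computation at the links.

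The genuinely new step, absent in the closed $b_+ > 1$ theory, is the reconciliation of the correction terms. Because $b_+ = 0$, both invariants require a correction to cancel metric dependence, and these corrections must be shown to agree under the $\operatorname{PU}(2)$ cobordism. Here I would invoke the end-periodic Atiyah--Patodi--Singer index theorem developed in the project and described in Section~\ref{S:formula}: the plan is to express \emph{both} the Seiberg--Witten correction $w(X,g,\beta) = \ind_{\,\mathbb C}(D^+(Z_+,g)+\beta) + \tfrac18\,\sign Z$ and the Donaldson-side correction as the same combination of the end-periodic $\eta$-invariant of the project, using the splitting of the $\operatorname{PU}(2)$ deformation complex along the reducible locus into a Dirac summand and an anti-self-dual summand. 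If those two corrections reduce to a common spectral quantity on the periodic end, then the cobordism relation between the counts upgrades directly to the relation \eqref{E:conj} between the full invariants.

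The hardest part will be the compactness and gluing analysis of $\M^{\operatorname{PU}(2)}(X,g,\beta)$ on a manifold with a periodic end: one must control Uhlenbeck bubbling and the behavior of solutions escaping into the end \emph{simultaneously}, and ensure that the only limiting configurations are those at the two identified strata, so that no uncounted boundary contributes. This requires extending the Fredholm and compactness theory of end-periodic elliptic operators beyond the linear Dirac setting of Theorem~\ref{T:fred} to the nonlinear, reducible-containing $\operatorname{PU}(2)$ moduli problem, and it is the analytic step most likely to obstruct a fully general argument. As a consistency check and a source of confidence in the sign, I would first verify that for $X = S^1 \times \Sigma$ and, more generally, for mapping tori, the whole scheme collapses onto a three-dimensional Lefschetz--Casson computation in which the instanton and Seiberg--Witten Floer counts of $\Sigma$ are compared via Lim's theorem, recovering the known equality and fixing the orientation conventions before attacking the general case.
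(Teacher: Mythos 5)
You should first be aware that the statement you set out to prove is labeled a \emph{conjecture} in the paper, and the paper does not prove it in general. What the paper actually provides is verification in special cases: the product case $X = S^1 \times \Sigma$ (via Lim's identification of the Chen--Lim invariant with the Casson invariant), the case of mapping tori of fixed-point-free finite-order diffeomorphisms (the paper's one original result), and certain branched-cover mapping tori. In all of these the method is the same: reduce both $\lambda_{\,\SW}$ and $\lambda_{\,\FO}$ to three-dimensional and spectral data --- Baldridge's circle-bundle theorem for the moduli count, the end-periodic index theorem for the correction term, the Marcolli--Wang formula \eqref{E:walker}, Walker's surgery formula, and a $\rho$-invariant computation for lens spaces --- and check that the two explicit answers agree. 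Your proposal instead attacks the general case head-on via a $\operatorname{PU}(2)$-monopole cobordism in the style of Pidstrigach--Tyurin and Feehan--Leness. That is a genuinely different route, and if carried out it would be far stronger than anything in the paper; but as written it is a research program, not a proof, and the gaps sit exactly at the load-bearing steps.

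Concretely: (1) Your claim that the ends of $\M^{\operatorname{PU}(2)}(X,g,\beta)$ consist only of the anti-self-dual stratum and the abelian Seiberg--Witten stratum fails even in the closed $b_+ > 1$ setting --- the Uhlenbeck compactification contributes ideal strata from bubbling, and computing those lower-stratum contributions is precisely the content of the (book-length) Feehan--Leness program; nothing in your sketch says why they vanish or cancel here. (2) With $b_+ = 0$ and $H^1(X;\Z) = \Z$, the $\operatorname{PU}(2)$ moduli space contains additional reducibles (flat and abelian anti-self-dual connections) whose links must also be analyzed; this is the very reason Furuta--Ohta impose the $\Z[\Z]$ homology hypothesis, and your cobordism needs a corresponding analysis that is nowhere indicated. (3) The Fredholm and compactness theory you need for the nonlinear $\operatorname{PU}(2)$ equations in the presence of the periodic-end correction terms does not exist; Theorem~\ref{T:fred} is linear and specific to the Dirac operator, and you acknowledge this yourself --- but acknowledging the gap does not fill it. Finally, your proposed consistency check underestimates what is involved: Lim's theorem alone settles only the product case, whereas the mapping-torus verification already requires the full chain of Baldridge, Marcolli--Wang, Walker's surgery formula, and equivariant signature computations that constitutes the paper's argument.
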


Let us briefly explain how this conjecture is relevant to the study of manifolds with the homotopy type of $S^1\times S^3$. High-dimensional surgery theory predicts a calculation of the structure set  $\S_{\rm{Diff}}(S^1\times S^3)$, consisting of homotopy equivalences 
$$
f:X \to S^1\times S^3
$$
with $X$ a smooth manifold. The surgery exact sequence (\cite{wall:book}; see~\cite{kirby-taylor:surgery} for the calculations cited below) would predict that the cokernel of the map 
$$
N(S^1\times S^3 \times I, S^1\times S^3 \times \partial I) \overset{\sigma}{\longrightarrow} L_5(\Z[\Z])
$$
between the normal maps on $S^1\times S^3 \times I$ and the surgery group $L_5(\Z[\Z])$ acts freely on $\S(S^1\times S^3)$.  Both of those groups are isomorphic to $\Z$, where the isomorphism is given by the signature of a codimension-one submanifold dual to a generator of $H^1(S^1\times S^3)$.  A computation involving Rohlin's theorem implies that the map $\sigma$ is actually multiplication by $2$; the upshot is that one might expect a smooth manifold homotopy equivalent to $S^1\times S^3$, detected by the Rohlin invariant.  

The conjecture above would imply that such a manifold does not exist. For it is automatic from the definition that $\lambda_{\,\FO} (X) =0$ for any $X$ homotopy equivalent to $S^1 \times S^3$ and so the conjecture would imply that $ \lambda_{\,\SW} (X)$ vanishes as well.  But then the last part of Theorem~\ref{lsw} would give the vanishing of the Rohlin invariant.  We remark that although there are exotic homotopy equivalences produced by the action of $L_5(\Z[\pi])$ on the structure set~\cite{scharlemann:phs,akbulut:scharlemann} there does not seem to be any known example of an exotic smooth structure on a $4$-manifold produced by this action; the case of $S^1\times S^3$ is the most basic test case.

To conclude this section, we will mention that the invariant $\lambda_{\,\SW} (X)$ was extended in \cite{MRS1} to a wider class of negative-definite 4-manifolds $X$, those with $H_1 (X;\mathbb Z) = \mathbb Z$ but not necessarily vanishing $H_2 (X;\mathbb Z)$. Such manifolds are encountered, for example, in the study of non-K{\"a}hler complex surfaces, see \cite{okonek-teleman:b+0}.

%%%%%%%%%%%%%%%%%%%%%%%%%%%%%%%%%%%%%%%%%%%%%%%%%%%%%%%%%%%%%%%%%%%%%%%%%%%%%%

\section{A formula for $\lambda_{\,\SW}$}\label{S:formula}
In this section we express the invariant $\lambda_{\,\SW}(X)$ solely in terms of the manifold $X$, without referring to auxiliary end-periodic manifolds. This formula will follow from the general index theorem for end-periodic operators proved in \cite{MRS2}. 

Before we state the theorem we need a few definitions. Let $X$ be a homology $S^1 \times S^3$ with a fixed orientation and a fixed homology orientation and choose a smooth function $f: X \to S^1$ so that $[df] = 1 \in H^1 (X;\mathbb Z)$. For any choice of metric $g$, consider the holomorphic family
\[
D^{\pm}_z = D^{\pm} (X,g) - \ln z\cdot df,\quad z \in \mathbb C^*, 
\]
of twisted Dirac operators on $X$. All of these operators have index zero. It follows from \cite{ruberman-saveliev:ggt} that, for a generic metric $g$, the operators $D^{\pm}_z$ are invertible away from a discrete set $\S \subset \mathbb C^*$ and moreover, the set $\S$ can be chosen to be disjoint from the unit circle $|z| = 1$. In particular, all of the operators $D^{\pm}_z$ with $|z| = 1$ are invertible. The set $\S$ is called the \emph{spectral set}; one can show that it is independent of the choice of $f$. The \emph{$\eta$-invariant} is defined in \cite{MRS2} by the formula
\[
\eta (X,g)\;=\;\frac 1 {\pi i}\,\int_0^{\infty} \oint_{|z| = 1} \Tr\,\left(df\cdot D^+_z e^{-t D^-_z D^+_z}\right)\,\frac {dz} {z}\;dt.
\]

\smallskip
To get a better grip on $\eta (X,g)$ consider the special case of $X = S^1 \times Y$ with a product metric and spin structure so that $D^+ (X,g) = \p/\p t - D$, where $D$ is the self-adjoint Dirac operator on $Y$. Choose $f: X \to S^1$ to be the projection onto the circle factor then the above formula will simplify to 

\[
\eta (S^1 \times Y,g)\;=\;\frac 1 {\sqrt{\,\pi}}\,\int_0^{\infty} t^{-1/2}\,\Tr\,(D\,e^{-t D^2})\,dt.
\]

\medskip\noindent
The right hand side of this formula matches the $\eta$-invariant $\eta_{\,\Dir}(Y)$ of Atiyah, Patodi and Singer \cite{aps:I} hence we conclude that $\eta (S^1 \times Y,g) = \eta_{\,\Dir}(Y)$. According to \cite{aps:I}, the $\eta$-invariant $\eta_{\,\Dir} (Y)$ also equals the value at $s = 0$ of the meromorphic extension of the function  
\[
\sum_{\lambda \ne 0}\; \sign \lambda\,|\lambda|^{-s}
\]
defined for sufficiently large $\Re(s)$ by summing over the spectrum of $D$. Thus one can say that $\eta_{\,\Dir} (Y)$ measures the asymmetry of the spectrum of $D$. Similarly, we show in \cite{MRS2} that $\eta (X,g)$ measures the asymmetry of the spectral set $\S$ with respect to the unit circle: the integral defining $\eta (X,g)$ can be viewed as a regularization of the difference between the number of spectral points outside of the circle $|z| = 1$ and the number of those inside. 

The following is a special case of the end-periodic index theorem proved in \cite{MRS2} for Dirac-type operators in all dimensions divisible by four.

\begin{theorem}\label{T:index}
Let $X$ be a homology $S^1 \times S^3$ and $Z_+$ an end-periodic manifold whose end is modeled on the infinite cyclic cover of $X$. For a generic metric $g$ on $X$ making $D^+ (Z_+,g)$ Fredholm, we have 
\[
\ind_{\,\mathbb C} D^+ (Z_+,g)\;=\;\int_Z \widehat A\;-\,\int_Y \omega\;+\,\int_X df\wedge \omega\,-\,\frac 1 2 \,\eta (X,g),
\]
where $Y \subset X$ is a submanifold dual to $1 \in H^1 (X;\mathbb Z)$ and $\omega$ is a transgressed $\widehat A$--class given by $d\omega = \widehat A$. 
\end{theorem}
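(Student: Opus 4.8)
The plan is to prove Theorem~\ref{T:index} by the heat-equation method, taking the Atiyah--Patodi--Singer argument as a template but supplying the new analytic input needed to handle a periodic, rather than cylindrical, end. Write $D^{\pm} = D^{\pm}(Z_+,g)$ and consider the regularized heat supertrace
\[
\Phi(t)\;=\;\int_{Z_+} \Big(\tr\, e^{-t D^- D^+}(x,x)\;-\;\tr\, e^{-t D^+ D^-}(x,x)\Big)\,dx,
\]
calling the integrand the heat supertrace density. The first task is to make sense of $\Phi(t)$: because $Z_+$ is noncompact the heat operators are not trace class, and deep in the end the heat kernel is exponentially close to that of the full periodic manifold $\tilde X$, whose supertrace density does not integrate to zero over a period. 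The invertibility of the family $D^{\pm}_z$ on the unit circle $|z|=1$ --- equivalently the Fredholm hypothesis --- produces a spectral gap together with uniform off-diagonal decay estimates, which I would use to define $\Phi(t)$ after subtracting the periodic model and to justify all interchanges of limits below.

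With $\Phi(t)$ in hand, I would compute its two limits. For the large-time limit, the spectral gap forces $e^{-tD^\mp D^\pm}$ to converge to the projections onto the finite-dimensional $L^2$ kernels of $D^{\pm}$, so that $\lim_{t\to\infty}\Phi(t)=\dim\ker D^+-\dim\ker D^-=\ind_{\,\mathbb C} D^+(Z_+,g)$; the absence of continuous spectrum accumulating at zero is precisely what the Fredholm hypothesis guarantees. For the small-time limit, the local index theorem gives that the heat supertrace density converges pointwise to $\widehat A(x)$, so the compact piece $Z$ contributes $\int_Z\widehat A$. The periodic end contributes the formally divergent sum $\sum_{n\ge 0}\int_{W_n}\widehat A$, which I would regularize using $\widehat A = d\omega$: Stokes over each $W_n$ telescopes the sum, the interface $Y=\partial Z$ yields $-\int_Y\omega$, and the constant per-period contribution $\int_{W}\widehat A$, re-expressed on the closed manifold via the covering translation, yields $\int_X df\wedge\omega$. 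Thus $\lim_{t\to 0}\Phi(t)=\int_Z\widehat A-\int_Y\omega+\int_X df\wedge\omega$.

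It remains to account for the difference between the two limits, namely the flux of the heat supertrace out through the periodic end, $\int_0^{\infty}\tfrac{d}{dt}\Phi(t)\,dt$. Differentiating under the integral and integrating by parts, this flux is concentrated on the end and, by periodicity, is governed entirely by the behavior of $D^{\pm}$ on $\tilde X$. The key device is the Fourier--Laplace (Floquet) transform, which converts operators on the periodic cover into the holomorphic family $D^{\pm}_z$ over $z\in\mathbb C^*$; the spectral set $\S$ then plays the role that the spectrum of the boundary operator plays in the classical theorem, and the asymmetry of $\S$ across $|z|=1$ is what the heat integral measures. Carrying out this reduction, I expect the flux to equal $\tfrac12\,\eta(X,g)$ with $\eta$ in precisely its contour-integral form, so that $\ind_{\,\mathbb C}D^+(Z_+,g)=\lim_{t\to0}\Phi(t)-\tfrac12\,\eta(X,g)$ gives the stated formula.

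The hard part will be this last step. Unlike the classical APS situation, the end is genuinely non-product, so there is no boundary Dirac operator whose $\eta$-invariant one can simply quote, and the entire contour-integral $\eta(X,g)$ must be produced from scratch. Concretely, the main obstacles are (i) proving the uniform heat-kernel estimates on the periodic end that make $\Phi(t)$ and its flux well defined, and (ii) showing that the Floquet-transformed flux integral converges and reassembles into $\frac{1}{\pi i}\int_0^{\infty}\oint_{|z|=1}\Tr\left(df\cdot D^+_z\, e^{-tD^-_z D^+_z}\right)\frac{dz}{z}\,dt$. As a consistency check, the machinery must collapse in the product case $X=S^1\times Y$ to the cylinder, recovering $\eta(S^1\times Y,g)=\eta_{\,\Dir}(Y)$ and the ordinary Atiyah--Patodi--Singer theorem.
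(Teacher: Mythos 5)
Your skeleton --- a regularized heat supertrace, the $t\to\infty$ limit giving the index via the spectral gap, the $t\to 0$ limit giving local terms, and a Fourier--Laplace (Floquet) analysis of the end producing the contour-integral $\eta$ --- is indeed the strategy of \cite{MRS2}, which is where Theorem~\ref{T:index} is actually proved (the survey only quotes it). But your accounting of the local terms contains a concrete error, and it sits exactly at the point where the end-periodic theorem differs from a naive APS analogue. On the end of $Z_+$ the metric is by definition pulled back from $X$, so the form $\widehat A$ there is the pullback of $\widehat A(g)$ on $X$, as is the transgression $\omega$. Since $b_2(X)=0$ forces $\sign X = 0$, Chern--Weil gives $\int_{W_n}\widehat A \,=\, \int_X \widehat A \,=\, -\frac18\,\sign X \,=\, 0$ for \emph{every} period; moreover $\int_{Y_{n+1}}\omega = \int_{Y_n}\omega$ for every $n$. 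Hence your ``formally divergent sum'' $\sum_{n\ge 0}\int_{W_n}\widehat A$ is a sum of zeros, the telescoping Stokes argument cancels exactly rather than leaving $-\int_Y\omega$, and the ``constant per-period contribution'' that you want to convert into $\int_X df\wedge\omega$ vanishes identically. Your motivating claim about the periodic model is false for the same reason: by the Floquet decomposition, its supertrace density integrates over a period to the average over $|z|=1$ of $\ind_{\,\mathbb C} D^+_z = 0$, i.e.\ to zero for all $t$. So with your subtraction-regularized $\Phi(t)$ the small-time limit is $\int_Z\widehat A$ and nothing more; the two terms $-\int_Y\omega+\int_X df\wedge\omega$ are left unaccounted for, and pairing that with your claim that the flux equals $\frac12\,\eta(X,g)$ would prove the \emph{false} formula $\ind_{\,\mathbb C} D^+(Z_+,g)=\int_Z\widehat A-\frac12\,\eta(X,g)$.

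What actually happens in \cite{MRS2} is that the transgression terms are produced by the regularization itself, i.e.\ by the same analysis you reserve for $\eta$ alone. If one regularizes with a cutoff function $\phi$ equal to $1$ on $Z$ and dying out along the end, the $t\to0$ limit of $\Tr(\phi\, e^{-tD^-D^+})-\Tr(\phi\, e^{-tD^+D^-})$ is $\int_{Z_+}\phi\,\widehat A$, and writing $\widehat A=d\omega$ on the end and integrating by parts against $d\phi$ --- not against the period boundaries --- yields precisely $\int_Z\widehat A-\int_Y\omega+\int_X df\wedge\omega$ up to $\phi$-dependent terms that cancel against the flux; equivalently, with your subtraction scheme, the Floquet analysis of the flux must be shown to produce $-\int_Y\omega+\int_X df\wedge\omega-\frac12\,\eta(X,g)$, local terms included. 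In other words, the $\omega$-terms live where the regularization ``turns on,'' not in period-by-period integrals of $\widehat A$ (all of which vanish), and any correct execution of your plan has to extract them from the junction/flux analysis. Separating them cleanly from the $\eta$-contribution there is the heart of the proof in \cite{MRS2}, so this is not a presentational slip but the main missing idea.
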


Under the assumption that $Y \subset X$ is chosen to have a normal neighborhood $N(Y) \subset X$ with product metric and $\operatorname{supp} f \subset N(Y)$, the index formula of this theorem can be simplified to
\begin{equation}\label{E:index}
\ind_{\,\mathbb C} D^+ (Z_+,g)\;=\;\int_Z \widehat A\,-\,\frac 1 2 \,\eta (X,g).
\end{equation}
In the special case of $X = S^1 \times Y$, this formula reduces to that of Atiyah, Patodi and Singer \cite{aps:I} for manifolds with product ends. 

\begin{corollary}
Let $X$ be a homology $S^1 \times S^3$ with a metric $g$ such that the pair $(g,\beta)$ with $\beta = 0$ is regular, and suppose that $Y \subset X$ can be chosen to have a normal neighborhood $N(Y) \subset X$ with product metric and $\operatorname{supp} f \subset N(Y)$. Then 
\[
\lambda_{\,\SW} (X)\;=\;\#\,\M(X,g,\beta)\,+\,\frac 1 8\,\eta_{\,\Sign} (Y)\,+\, \frac 1 2\,\eta (X,g),
\]
where $\eta_{\,\Sign} (Y)$ is the Atiyah--Patodi--Singer $\eta$-invariant of the odd signature operator on $Y$.
\end{corollary}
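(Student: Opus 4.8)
The plan is to start from the definition $\lambda_{\,\SW}(X) = \#\,\M(X,g,\beta) - w(X,g,\beta)$ and specialize to $\beta = 0$. Since $(g,0)$ is regular by hypothesis, the count $\#\,\M(X,g,0)$ is well defined, and by Theorem~\ref{lsw} the invariant $\lambda_{\,\SW}(X)$ may be computed from this particular regular pair. Unwinding the correction term gives
\[
\lambda_{\,\SW}(X)\;=\;\#\,\M(X,g,0)\,-\,\ind_{\,\mathbb C} D^+(Z_+,g)\,-\,\frac 1 8\,\sign Z.
\]
The hypotheses on $Y$---namely that it has a product-metric normal neighborhood $N(Y)$ containing $\operatorname{supp} f$---are precisely those under which the simplified index formula \eqref{E:index} holds, so I would substitute $\ind_{\,\mathbb C} D^+(Z_+,g) = \int_Z \widehat A - \tfrac 1 2 \eta(X,g)$ to obtain
\[
\lambda_{\,\SW}(X)\;=\;\#\,\M(X,g,0)\,-\,\int_Z \widehat A\,+\,\frac 1 2\,\eta(X,g)\,-\,\frac 1 8\,\sign Z.
\]

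The remaining task is to identify the metric-dependent combination $-\int_Z \widehat A - \tfrac 1 8\sign Z$ with $\tfrac 1 8\,\eta_{\,\Sign}(Y)$. For this I would invoke the Atiyah--Patodi--Singer signature theorem for the compact manifold $Z$ with boundary $Y$, in the form $\sign Z = \int_Z L - \eta_{\,\Sign}(Y)$; this applies because the product-metric assumption on $N(Y)$ makes $Z$ collared by a product metric near $Y$, exactly as the APS theorem requires. In dimension four the degree-four parts of the $\widehat A$- and $L$-forms are $-p_1/24$ and $p_1/3$ respectively, whence $\int_Z \widehat A = -\tfrac 1 8 \int_Z L$. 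Combining these two inputs gives $-\int_Z \widehat A = \tfrac 1 8\int_Z L = \tfrac 1 8\bigl(\sign Z + \eta_{\,\Sign}(Y)\bigr)$, so that
\[
-\int_Z \widehat A\,-\,\frac 1 8\,\sign Z\;=\;\frac 1 8\,\eta_{\,\Sign}(Y),
\]
and substituting this into the previous display yields the claimed formula.

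The computation itself is short, so the only real content lies in checking the two analytic inputs. The more delicate of these is verifying that the simplified formula \eqref{E:index} genuinely applies: one must confirm that the product-neighborhood hypothesis is the exact geometric condition under which the boundary and transgression terms $\int_Y \omega$ and $\int_X df\wedge\omega$ of Theorem~\ref{T:index} both vanish, leaving only $\int_Z \widehat A - \tfrac 1 2\eta(X,g)$. The second, more routine input is the bookkeeping of the APS signature theorem together with the dimension-four normalization relating the $\widehat A$- and $L$-forms; here care is needed only to pin down the factor $\tfrac 1 8$ and the signs, matching the normalization already built into the definition of $w(X,g,\beta)$. I expect no genuine obstacle beyond confirming that the hypotheses of the corollary are tailored to trigger \eqref{E:index} and the APS theorem simultaneously.
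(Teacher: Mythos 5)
Your proposal is correct and follows essentially the same route as the paper's proof: both combine the simplified index formula \eqref{E:index} with the Atiyah--Patodi--Singer signature theorem and the degree-four identities $\widehat A = -p_1/24$, $L = p_1/3$, so that the Pontryagin terms cancel and only $\tfrac 1 8\,\eta_{\,\Sign}(Y) + \tfrac 1 2\,\eta(X,g)$ survives. The only difference is cosmetic: the paper evaluates the correction term $w(X,g,\beta)$ first and then subtracts, while you substitute directly into the definition of $\lambda_{\,\SW}(X)$.
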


\begin{proof}
Using formula \eqref{E:index} together with the signature theorem of Atiyah, Patodi and Singer \cite{aps:I},
\[
\sign Z\;=\; \int_Z L\,-\,\eta_{\,\Sign} (Y),
\]
and keeping in mind that $\widehat A = - p_1/24$ and $L = p_1/3$ in degree four, we obtain
\begin{multline}\notag
w(X,g,\beta) 
\,=\, \ind_{\,\mathbb C} D^+ (Z_+,g) + \frac 1 8\,\sign Z \\
= -\frac 1 {24}\,\int_Z p_1  - \frac 1 2\,\eta (X,g) + \frac 1 {24}\,\int_Z p_1
- \frac 1 8\,\eta_{\,\Sign} (Y),
\end{multline}

\medskip\noindent
and the statement obviously follows. 
\end{proof}

Finally, we mention that the requirement that $(g,\beta)$ be a regular pair for $\beta = 0$ is not essential. For an arbitrary regular pair $(g,\beta)$, Theorem \ref{T:index} will hold for the perturbed Dirac operator $D^+ (Z_+,g) + \beta$ once the family $D^{\pm}_z$ used to define $\eta (X,g)$ is replaced with the perturbed family $D^{\pm}_z + \beta$. 

%%%%%%%%%%%%%%%%%%%%%%%%%%%%%%%%%%%%%%%%%%%%%%%%%%%%%%%%%%%%%%%%%%%%%%%%%%%%%%

\section{The product case} 
Let $\Sigma$ be an oriented integral homology sphere and $X = S^1 \times \Sigma$. We will work with product metrics $g$ on $X$ and with perturbations $\beta$ which are constant in the direction of $S^1$, and from now on we will suppress both in our notations. It is a well-known fact that $\M(S^1 \times \Sigma)$ equals $\M (\Sigma)$, the Seiberg--Witten moduli space in dimension 3, see for instance \cite{KM}.  
Together with the above discussion of the $\eta$-invariants, this implies that
\[
\lambda_{\,\SW} (S^1 \times Y)\;=\;\#\,\M(\Sigma)\,+\, \frac 1 8\;\eta_{\,\Sign} (Y)\,+\, \frac 1 2\;\eta_{\,\Dir} (Y).
\]
The right hand side of this equality was studied by Weimin Chen \cite{chen:casson} and Yuhan Lim \cite{lim:swcasson}. Lim showed that it equals, up to an overall sign, the Casson invariant $\lambda (\Sigma)$. Recall that $\lambda(\Sigma)$ is defined as one half times a signed count of the conjugacy classes of irreducible $SU(2)$ representations of $\pi_1 (\Sigma)$, see \cite{AMc}. A quick calculation with the Poincar\'e homology sphere $\Sigma(2,3,5)$ fixes the overall sign to be negative, hence we conclude that
\[
\lambda_{\,\SW} (S^1 \times \Sigma)\;=\;-\lambda (\Sigma).
\]
This confirms our conjecture \eqref{E:conj} in the product case since we showed in \cite{ruberman-saveliev:mappingtori} that $\lambda_{\,\FO} (S^1 \times \Sigma) = \lambda (\Sigma)$. We wish to extend the above calculation of $\lambda_{\,\SW}$ and verify the conjecture in the next simplest case, that of mapping tori with finite order monodromy.

%%%%%%%%%%%%%%%%%%%%%%%%%%%%%%%%%%%%%%%%%%%%%%%%%%%%%%%%%%%%%%%%%%%%%%%%%%%%%%

\section{Mapping tori: free case}
Let $\Sigma$ be an oriented integral homology sphere and $X$ the mapping torus of an orientation preserving diffeomorphism $\tau: \Sigma \to \Sigma$ of order $n$. Suppose that $\tau: \Sigma \to \Sigma$ has no fixed points then the quotient $\Sigma' = \Sigma/\tau$ is a homology lens space, and $X$ can be viewed as the total space of the circle bundle $\pi: X \to \Sigma'$ whose Euler class generates $H^2(\Sigma';\Z) = \Z/n$. Let $i\eta$ be the connection form of this bundle and $g'$ a metric on $\Sigma'$. Endow $X$ with the metric $g = \eta^2 + \pi^*g'$. Furthermore, given a perturbation 1-form $\beta'$ on $\Sigma'$, lift it to the perturbation 1-form $\beta = \pi^* \beta'$ on $X$. 

Note that the unique $\rm{spin}^c$-structure on $X$ is pulled back from $\Sigma'$. Therefore, according to \cite[Theorem B]{baldridge}, the pull-back map 
\[
\pi^*:\;\M^* (\Sigma',g',\beta') \to \M^*(X,g,\beta)
\]
provides a bijective correspondence between the \emph{irreducible} portions of the Seiberg--Witten moduli spaces on $\Sigma'$ and on $X$. Moreover, for a generic choice of $(g',\beta')$, there are no reducibles on $\Sigma'$ and the above correspondence is an orientation preserving diffeomorphism between compact oriented 0-dimensional manifolds. The full moduli space $\M (X,g,\beta)$ may in principle contain reducibles because perturbation forms $\beta = \pi^*\beta'$ as above are not dense in the space of all perturbations. That $\M (X,g,\beta)$ is actually free of reducibles can be verified by a Fourier analysis calculation using the observation that the infinite cyclic cover of $X$ is isometric to a product. 

The same observation tells us that $\eta (X,g) = \eta_{\,\Dir} (\Sigma)$ hence we conclude that
\[
\lambda_{\,\SW} (X)\,=\,\sum_{\sigma'}\;\#\,\M (\Sigma',\sigma')\,+\,
\frac 1 2\,\eta_{\,\Dir}\,(\Sigma)\,+\,\frac 1 8\,\eta_{\,\Sign} (\Sigma),
\]
where we broke the moduli space $\M(\Sigma',g',\beta')$ into a disjoint union of the moduli spaces $\M(\Sigma',\sigma')$ corresponding to the $n$ distinct $\rm{spin}^c$-structures on $\Sigma'$, and suppressed metrics and perturbations in our notations. On the other hand, consider the rational number
\[
\sw^0 (\Sigma',\sigma')\,=\,\#\,\M(\Sigma',\sigma')\,+\,\frac 1 2\,
\eta_{\,\Dir}\;(\Sigma',\sigma')\,+\,\frac 1 8\;\eta_{\,\Sign} (\Sigma'),
\]
where $\eta_{\,\Dir}\,(\Sigma',\sigma')$ stands for the $\eta$-invariant of the $\rm{spin}^c$ Dirac operator corresponding to the ${\rm spin}^c$ structure $\sigma'$. Lim \cite{lim:b1} showed that $\sw^0 (\Sigma',\sigma')$ is a topological invariant, and Marcolli and Wang \cite[Theorem 1.1]{marcolli-wang} later proved that\footnote{The orientation conventions in Marcolli--Wang \cite{marcolli-wang} differ from ours, which accounts for the extra negative sign in our formula compared to theirs, cf. Nicolaescu \cite{nic-sw}.}
\begin{equation}\label{E:walker}
\sum_{\sigma'}\;\sw^0 (\Sigma',\sigma')\,= -\lambda_{\W} (\Sigma'),
\end{equation}
where $\lambda_{\W} (\Sigma')$ is the Casson--Walker invariant normalized as in Lescop \cite{lescop}, meaning that $\lambda_{\W}(\Sigma')$ equals $n/2$ times the Casson--Walker invariant defined in Walker \cite{walker}. Combining the last three formulas, we obtain
\begin{multline}\notag
\lambda_{\,\SW} (X)\,=\,-\lambda_{\W}(\Sigma')\, + \frac 1 8\,
\left(\eta_{\,\Sign}(\Sigma) - n\cdot\eta_{\,\Sign} (\Sigma')\right) \\ 
+ \frac 1 2\,\left(\eta_{\,\Dir}(\Sigma) - \sum_{\sigma'}\;\eta_{\,\Dir} 
(\Sigma',\sigma')\right).
\end{multline}

We will next identify the last two terms on the right. The last term actually vanishes: since $\eta_{\,\Dir}(\Sigma',\sigma')$ are just the $\eta$-invariants of the spin Dirac operator twisted by representations $\alpha: \pi_1 \Sigma' \to U(1)$, their sum over all $\alpha$ clearly equals the $\eta$-invariant of the Dirac operator on $\Sigma$. Using the $\rho$--invariants of \cite{aps:II} we can write
\[
\eta_{\,\Sign}(\Sigma) - n\cdot\eta_{\,\Sign} (\Sigma')\,=\, 
\sum_{\alpha}\;\rho_{\alpha} (\Sigma'),
\]
where the summation extends to all the representations $\alpha: \pi_1 
\Sigma' \to U(1)$. To calculate $\rho_{\alpha} (\Sigma')$, view $\Sigma'$ 
as the result of $(n/q)$--surgery along a knot $k$ in an integral homology 
sphere $Y$, where $q$ is relatively prime to $n$ and $0 < q < n$. 

\begin{proposition}
Let $\alpha: \pi_1 \Sigma' \to U(1)$ send the meridian of the knot $k$ to 
$e^{2\pi i m/n} \in U(1)$. Then 
\[
\rho_{\alpha} (\Sigma')\,=\,\rho_{\alpha} (L(n,q))\,-\,\sign^{m/n} k,
\]
where $L(n,q)$ is the lens space obtained by $(n/q)$-surgery on the unknot
in $S^3$, and $\sign^{m/n} k$ is the Tristram--Levine equivariant knot 
signature of $k$. 
\end{proposition}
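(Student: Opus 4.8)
\emph{The tool.} I would work throughout with the Atiyah--Patodi--Singer $\rho$-invariant in its guise as a signature defect. Recall that for a closed oriented $3$-manifold $M$ with a unitary character $\alpha\colon\pi_1 M\to \U(1)$, the invariant $\rho_\alpha(M)=\eta_\alpha(M)-\eta_{\,\Sign}(M)$ is a bordism invariant: whenever $(M,\alpha)$ bounds a compact oriented $(W,\tilde\alpha)$ with $\tilde\alpha\colon\pi_1 W\to\U(1)$ restricting to $\alpha$, the APS signature theorem \cite{aps:II} gives $\rho_\alpha(M)=\pm(\sign_{\tilde\alpha}W-\sign W)$, the twisted signature being that of the intersection form with the flat line-bundle coefficients determined by $\tilde\alpha$. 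The plan is to compare $\Sigma'$ and $L(n,q)$ through a single cobordism so that the \emph{difference} $\rho_\alpha(\Sigma')-\rho_\alpha(L(n,q))$ becomes a twisted signature defect concentrated on the knot, and then to identify that defect with $-\sign^{m/n} k$.

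\emph{Building the cobordism.} Write $\zeta=e^{2\pi i m/n}$. Both $\Sigma'$ and $L(n,q)$ are $(n/q)$-surgeries, so each decomposes as a knot exterior glued to the surgery solid torus by the \emph{same} gluing: $\Sigma'=(Y\setminus N(k))\cup_{T^2}ST$ and $L(n,q)=(S^3\setminus N(U))\cup_{T^2}ST$, where $U$ is the unknot. First I would realize each surgery as the boundary of the corresponding trace (a plumbing prescribed by the continued-fraction expansion of $n/q$), using that $Y$ bounds a spin $4$-manifold and $S^3=\partial D^4$. Stacking these traces and observing that the two surgery-solid-torus pieces cancel (they assemble into $ST\times[0,1]$), one is left with a compact $4$-manifold $W$ cobounding $\Sigma'$ and $-L(n,q)$ that is built, rel the boundary torus, from a cobordism $P$ between the two knot exteriors $Y\setminus N(k)$ and $S^3\setminus N(U)$. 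Since $\alpha$ sends $\mu\mapsto\zeta$ and factors through $H_1=\Z$ on each exterior, it extends over $P$, hence over $W$, to the required $\tilde\alpha$. This reduces the problem to
\[
\rho_\alpha(\Sigma')-\rho_\alpha(L(n,q))=\pm\bigl(\sign_{\tilde\alpha}W-\sign W\bigr),
\]
where the right-hand side is supported on the knot-exterior part $P$.

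\emph{Identifying the defect with the Tristram--Levine signature.} On $P$ the twisting factors through the abelianization, so its twisted signature defect is computed on the infinite cyclic cover of the knot exterior. Choosing a Seifert surface $F\subset Y$ for $k$ with Seifert matrix $V$, the twisted intersection form that controls the defect is the hermitian form $(1-\zeta)V+(1-\bar\zeta)V^{\mathsf T}$, whose signature is by definition $\sign^{m/n}k$. For the unknot $F$ is a disk, $V$ is empty, and this contribution vanishes--which is exactly why only the ``lens-space part'' $\rho_\alpha(L(n,q))$ survives on that side, giving the sanity check that the formula is an identity when $Y=S^3$ and $k=U$. Assembling these pieces yields $\rho_\alpha(\Sigma')=\rho_\alpha(L(n,q))-\sign^{m/n}k$.

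\emph{The main obstacle.} The delicate point is bookkeeping rather than analysis. The character $\alpha$ is torsion, so I must use flat $\U(1)$-coefficients consistently and verify that $\tilde\alpha$ extends over the entire plumbed trace (this is where the survival of the meridian $\mu$ in $H_1$ is essential). More seriously, the overall \emph{sign} and the precise matching of the plumbing's contribution with $\rho_\alpha(L(n,q))$ require care: one must check that the twisted and untwisted signature defects of the continued-fraction plumbing cancel except for the genuine lens-space term, so that the only residue is the Seifert-form contribution from $P$. Pinning down the sign convention so that the knot signature enters as $-\sign^{m/n}k$ (and not its negative) is the step I expect to consume the most effort, and I would fix it by testing the formula on an explicit example, for instance a $(\pm1)$-surgery where $\rho_\alpha$ and $\sign^{m/n}k$ are both computable by hand.
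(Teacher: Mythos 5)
Your overall framework---computing the difference $\rho_\alpha(\Sigma')-\rho_\alpha(L(n,q))$ as a twisted-minus-untwisted signature defect of a $4$-manifold via the Atiyah--Patodi--Singer signature theorem---is the right tool, and it is also the engine behind the paper's proof (which follows Kirk--Klassen--Ruberman \cite{kkr}). But there is a genuine gap at the central step: the cobordism $P$ is never actually constructed, and the construction you sketch does not produce it. The two surgery traces have boundaries $\Sigma'$ and $L(n,q)$ respectively and share no boundary piece, so there is nothing to ``stack''; if instead you glue them to each other along the surgery solid tori $ST\subset\Sigma'$ and $ST\subset L(n,q)$, then both $\Sigma'$ and $L(n,q)$ are consumed by the gluing, and the resulting $4$-manifold has boundary the $0$-surgery on $k$ (the knot exterior Dehn-filled along its longitude), not a cobordism from $\Sigma'$ to $L(n,q)$. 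More importantly, even granting some cobordism $P$ between the knot exteriors over which $\alpha$ extends, your assertion that ``the twisted intersection form that controls the defect is $(1-\zeta)V+(1-\bar\zeta)V^{T}$'' is precisely the theorem to be proved, not a definition: the Tristram--Levine signature is \emph{defined} from the Seifert matrix, but identifying it with the twisted signature defect of a concrete $4$-manifold requires building that $4$-manifold out of the Seifert surface. This is exactly what the paper does, following page 385 of \cite{kkr}: form $U=([0,1]\times Y)\cup H$ with $H$ a $0$-framed $2$-handle attached along $k$, cap the pushed-in Seifert surface $F$ with the core of $H$ to get a closed surface $\bar F$, and remove a tubular neighborhood of $\bar F$; the Seifert matrix then computes the twisted intersection form of the complement $W$. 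Without this construction (or an equivalent one in the style of Casson--Gordon, Litherland, or Viro), the key identification in your argument is unsupported.

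A secondary but real problem is additivity: your claim that the defect of the glued-up manifold ``is supported on the knot-exterior part $P$'' implicitly uses Novikov additivity for gluings along $3$-manifolds with boundary (solid tori, or $T^2\times[0,1]$), where Novikov additivity fails in general and Wall's non-additivity correction must be analyzed. The paper's argument is arranged precisely to avoid this: the cap $V$ (a simply connected $4$-manifold with boundary $Y$) and the handlebody piece $B\times S^1$ are glued along \emph{closed} boundary components $Y$ and $\bar F\times S^1$ of $W$, so Novikov additivity applies verbatim, $\alpha$ is trivial on $\pi_1 V$, and the contributions of $V$ cancel in the defect. Your plan to pin down the overall sign by an example is reasonable, but it cannot substitute for the missing Seifert-surface construction and the additivity justification.
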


\begin{proof}
For knots in $S^3$, this is essentially the formula of Kirk, Klassen, and Ruberman stated on page 388 of \cite{kkr} right before Theorem 4.4. We offer here a slight modification of their argument that extends that formula to knots in arbitrary homology spheres. Start as in the second paragraph on page 385 of \cite{kkr} with the manifold $U = ([0,1]\times Y)\,\cup\,H$, where $H$ is a 2-handle attached to $Y$ along the knot $k$ with zero framing. Choose a Seifert surface $F$ for $k$ and let $\bar F$ be the union of $F$ pushed slightly into $[0,1]\times Y$ and the core of the 2-handle. Remove a tubular neighborhood of $\bar F$ from $U$ to obtain a manifold $W$ with boundary. The boundary of $W$ will consist of three components\,: the 3-manifold obtained from $Y$ by 0-surgery on $k$, the product $\bar F \times S^1$, and the homology sphere $Y$. Let $B$ be a handlebody of genus equal to the genus of $F$, and $V$ a simply-connected smooth 4-manifold with boundary $Y$. Let
\[
M_X = V\,\cup\,W\,\cup\,(B\times S^1).
\]
By Novikov additivity, $\sign M_X = \sign V + \sign W$ and $\sign_{\alpha} M_X = \sign V + \sign_{\alpha} W$ since $\alpha$ restricts to a trivial representation on $\pi_1 V$. The calculation of $\sign W$ proceeds as before, and the contributions from $V$ cancel in the expression $3\,\sign N - \sign_{\alpha} N$ on page 388 of \cite{kkr}.
\end{proof}

Together with the surgery formula for the Casson--Walker invariant, see 
Walker \cite[Proposition 6.2]{walker}, this gives the following formula\,:
\begin{multline}\notag
\lambda_{\,\SW}(X)\,=\,-n\cdot\lambda(Y) - \frac 1 8\,\sum_{m=0}^{n-1}\;
\sign^{m/n} k - \frac q 2\,\Delta_k''(1) \\ - \lambda_{\W}(L(n,q)) + 
\frac 1 8\,\sum_{\alpha}\,\rho_{\alpha} (L(n,q)),
\end{multline}
where $\Delta_k (t)$ is the Alexander polynomial of the knot $k \subset Y$ normalized so that $\Delta_k (1) = 1$ and $\Delta_k (t^{-1}) = \Delta_k (t)$. This should be compared with the formula for $\lambda_{\,\FO} (X)$ from our paper \cite{ruberman-saveliev:mappingtori}\,:
\[
\lambda_{\FO}(X) = n\cdot\lambda(Y) + \frac 1 8\,\sum_{m=0}^{n-1}\; 
\sign^{m/n} k + \frac q 2\,\Delta_k''(1).
\]
We conclude that 
\[
\lambda_{\,\SW}(X) + \lambda_{\FO}(X)\;=\;- \lambda_{\W}(L(n,q)) +
\frac 1 8\,\sum_{\alpha}\,\rho_{\alpha} (L(n,q)),
\]
where
\[
\lambda_{\W}(L(n,q))\,=\,\frac 1 8\,\sum_{\alpha}\,\rho_{\alpha} (L(n,q))\,= -\frac 1 8\;\sum_{k=0}^{n-1}\;\cot\left(\frac {\pi qk}{n}\right)\cot\left(\frac {\pi k}{n}\right),
\]
see \cite[Proposition 6.3]{walker} for the Casson--Walker invariant and \cite[Proposition 2.12]{aps:II} for the $\rho$-invariants. This leads to the desired equality $\lambda_{\SW}(X) = -\lambda_{\FO}(X)$.

%%%%%%%%%%%%%%%%%%%%%%%%%%%%%%%%%%%%%%%%%%%%%%%%%%%%%%%%%%%%%%%%%%%%%%%%%%%%%

\section{Mapping tori: non-free case}
Assume now that $X$ is the mapping torus of a finite order diffeomorphism $\tau: \Sigma \to \Sigma$ of an oriented integral homology sphere $\Sigma$ which has fixed points. The quotient space $\Sigma' = \Sigma/\tau$ is then an integral homology sphere, and the projection $\Sigma \to \Sigma'$ is an $n$-fold branched cover with branch set a knot $k \subset \Sigma'$. We know from 
\cite{ruberman-saveliev:mappingtori} that 
\begin{equation}\label{E:one}
\lambda_{\FO} (X)\; =\; n\cdot \lambda(\Sigma') + \frac 1 8\,\sum_{m=0}^{n-1}\; 
\sign^{m/n} k,
\end{equation}

\smallskip\noindent
where $\lambda(\Sigma')$ is the Casson invariant of $\Sigma'$ and $\sign^{m/n} k$ are the Tristram--Levine equivariant knot signatures of $k$. As a first step towards computing $\lambda_{\SW} (X)$, we could use \cite{baldridge} to express $\M(X)$ in terms of the Seiberg--Witten moduli spaces on the \emph{orbifold} $\Sigma'$. While the Seiberg--Witten theory on orbifolds has been actively studied, see for instance Chen \cite{chen:orbifold}, one still lacks an orbifold version of the formula \eqref{E:walker} which was crucial for the calculation in the fixed point free case.

While the general case is still outstanding, we have been able in \cite{ruberman-saveliev:ded} to verify our conjecture in some special cases. To be specific, let $\Sigma = \Sigma(a_1,\ldots,a_n)$ be a Seifert fibered homology sphere oriented as a link of a Brieskorn--Hamm complete intersection singularity with real coefficients, and let $X$ be the mapping torus of the involution on $\Sigma$ induced by complex conjugation. We showed in \cite{MRS1} that, for a natural metric $g$ realizing the Thurston geometry on $\Sigma$, the pair $(g,0)$ is regular and $\M(X)$ is empty. Since the infinite cyclic cover of $X$ is isometric to a product, the invariant $\lambda_{\SW}(X)$ equals 
\[
\frac 1 2\,\eta_{\,\Dir} (\Sigma) + \frac 1 8\,\eta_{\,\Sign} (\Sigma),
\]
which in turn equals negative $\bar\mu (\Sigma)$, the $\bar\mu$--invariant of Neumann and Siebenmann, see \cite{ruberman-saveliev:ded}. The latter was identified with $\lambda_{\FO} (X)$  in \cite{ruberman-saveliev:mappingtori} thus leading to the conclusion that $\lambda_{\SW} (X) = -\lambda_{\FO}(X)$ in this case. 

\smallskip

%%%%%%%%%%%%%%%%%%%%%%%%%%%%%%%%%%%%%%%%%%%%%%%%%%%%%%%%%%%%%%%%%%%%%%%%%%%%%%

\end{document}